\documentclass{article}
\usepackage[latin1]{inputenc}
\usepackage{amsmath}
\usepackage{amsthm}
\usepackage{amsfonts}
\usepackage{amssymb}
\usepackage{amstext}
\usepackage{amsgen}
\usepackage{amsbsy}
\usepackage{amsopn}
\newtheorem{theorem}{Theorem}[section]
\newtheorem{lemma}[theorem]{Lemma}
\newtheorem{proposition}[theorem]{Proposition}

\newtheorem{definition}[theorem]{Definition}

\newtheorem{corollary}[theorem]{Corollary}

\title{On positivity in algebras of tempered generalized functions}
\author{Eberhard Mayerhofer \footnote{ Vienna Institute of Finance,
Heiligenst\"adterstrasse 46-48, 1190 Vienna, Austria. Supported by
grants P16742-N04 and Y237-N13 hold by M. Kunzinger, University of
Vienna}}
\date{Winter 2006}
\begin{document}
\maketitle
\begin{abstract}
An explicit counterexample shows that contrary to the situation in the special Colombeau algebra,
positivity and invertibility cannot be characterized pointwise
in algebras of tempered generalized functions. Further a point value characterization of the latter is refined.
\end{abstract}
\section{Introduction}\label{chapterconstant}
Let $d\geq 1$ and suppose a non-empty open subset $\Omega$ of
$\mathbb R^d$ is given. We denote by $\mathcal G_\tau(\Omega)$ the
algebra of tempered generalized functions on $\Omega$. It has been
established in \cite{MO} that $\mathcal G_\tau(\Omega)$ admits a
point-value characterization whenever $\Omega$ is a box. It has
further been shown that there exist open sets $\Omega\subset\mathbb
R^d$ with infinitely connected components, such that elements of
$\mathcal G_\tau(\Omega)$ are not determined by evaluation at
moderate generalized points. Aim of this note is to discuss the
question whether in algebras $\mathcal G_\tau$ that admit a
point-value characterization, a point-value characterization of
invertibility can be given. For an introduction to generalized
function algebras as introduced by Colombeau, Rosinger, Egorov and
others, we refer to the standard reference \cite{bible}.
\section{Preliminaries}
Let $\Omega$ be a non-empty open subset of $\mathbb R^d$. We work in
the Colombeau algebra of tempered generalized functions on $\Omega$
given by the quotient
\[
\mathcal G_\tau(\Omega):=\mathcal E_{M,\tau}(\Omega)/\mathcal N_\tau(\Omega),
\]
where  the ring of tempered moderate nets of smooth functions is given by
\[
\mathcal
E_{M,\tau}(\Omega):=\{(u_\varepsilon)_\varepsilon\in\mathcal
C^\infty(\Omega)^{(0,1]}\,|\,\exists\, N:\sup_{x\in\Omega}\vert
u_\varepsilon(x)\vert=O(\varepsilon^{-N}(1+\vert x\vert)^N))\}
\]
whereas the ideal of tempered negligible functions is given by
\[
\mathcal N_\tau(\Omega):=\{(u_\varepsilon)_\varepsilon\in\mathcal
C^\infty(\Omega)^{(0,1]}\,|\,\exists\, N\,\forall\,
p:\sup_{x\in\Omega}\vert u_\varepsilon(x)\vert=O(\varepsilon^p
(1+\vert x\vert)^N))\}.
\]
The latter is an ideal in $\mathcal E_{M,\tau}(\Omega)$.

Moderate generalized points in $\Omega$ are given by
\[
\widetilde{\Omega}:=\Omega_M/\sim
\]
with
\[
\Omega_M:=
\{(x_\varepsilon)_\varepsilon\in\Omega^{(0,1]}\,|\,\exists\,N:\vert
x_\varepsilon\vert =O(\varepsilon^{-N})\}
\]
and $\sim$ is the equivalence relation on $\Omega_M$ defined by
\[
(x_\varepsilon)_\varepsilon\sim (y_\varepsilon)_\varepsilon\Leftrightarrow \forall\,p\geq 0:\; \vert x_\varepsilon-y_\varepsilon\vert=O(\varepsilon^p),\,(\varepsilon\rightarrow 0).
\]
Tempered generalized functions can be evaluated on
moderate generalized points, that is given $u\in\mathcal
G_\tau(\Omega)$ and $\widetilde x\in\widetilde{\Omega}$, with
representatives $(u_\varepsilon)_\varepsilon$ and
$(x_\varepsilon)_\varepsilon$ respectively,
$(u_\varepsilon(x_\varepsilon))+\mathcal N_\tau(\Omega)$ yields a
well defined element of $\widetilde{\mathbb R}$. The following is
established in \cite{MO}:
\begin{theorem}\label{charpointvalues}
Let $\Omega\subseteq \mathbb R^d$ be an open box. Let $u\in\mathcal
G_\tau(\Omega)$. The following are equivalent:
\begin{enumerate}
\item $u=0$ in $\mathcal G_\tau(\Omega)$,
\item \label{charpoints} For all $\widetilde x\in\widetilde{\Omega}$ we have $u(\widetilde x)=0$ in $\widetilde{\mathbb R}$.
\end{enumerate}
\end{theorem}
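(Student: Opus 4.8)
\emph{Proof idea.} The implication (i)$\Rightarrow$(ii) is the routine direction: if $(u_\varepsilon)_\varepsilon\in\mathcal N_\tau(\Omega)$ and $(x_\varepsilon)_\varepsilon\in\Omega_M$ with $\abs{x_\varepsilon}=O(\varepsilon^{-M})$, then from $\abs{u_\varepsilon(x)}\le C_p\varepsilon^p(1+\abs{x})^N$ we get $\abs{u_\varepsilon(x_\varepsilon)}\le C_p\varepsilon^p(1+\abs{x_\varepsilon})^N=O(\varepsilon^{p-MN})$ for every $p$, so $(u_\varepsilon(x_\varepsilon))_\varepsilon$ is negligible and $u(\widetilde x)=0$; well-definedness of the evaluation has been recorded above. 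So the content is (ii)$\Rightarrow$(i), and the plan is to split it into a geometry-free step and a step that genuinely uses that $\Omega$ is a box. Throughout, the defining estimates of $\mathcal E_{M,\tau}$ and $\mathcal N_\tau$ are understood in the usual way, controlling all derivatives (the weight exponent allowed to depend on the order).

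\textbf{Step 1 (reduction to order zero).} First I would show, for \emph{any} open $\Omega$, that vanishing of all point values of $u$ already forces the order-$0$ negligibility estimate: there is $N$ with $\sup_{x\in\Omega}\abs{u_\varepsilon(x)}(1+\abs{x})^{-N}=O(\varepsilon^p)$ for all $p$. Arguing by contraposition, fix an order-$0$ moderateness exponent $N_1$ for $u$ and put $N:=N_1+1$; then $M_N(\varepsilon):=\sup_{x}\abs{u_\varepsilon(x)}(1+\abs{x})^{-N}\le C\varepsilon^{-N_1}$ is finite, and if the estimate fails there are an exponent $p_0$ and a sequence $\varepsilon_k\downarrow 0$ with $M_N(\varepsilon_k)\varepsilon_k^{-p_0}\to\infty$. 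Pick $x_{\varepsilon_k}\in\Omega$ with $\abs{u_{\varepsilon_k}(x_{\varepsilon_k})}\ge\tfrac12 M_N(\varepsilon_k)(1+\abs{x_{\varepsilon_k}})^{N}$; inserting this into the order-$0$ moderateness bound yields $(1+\abs{x_{\varepsilon_k}})^{N-N_1}\le 2C\,M_N(\varepsilon_k)^{-1}\varepsilon_k^{-N_1}\le 2C\,\varepsilon_k^{-p_0-N_1}$, so (extending the net arbitrarily off the sequence) $(x_\varepsilon)_\varepsilon\in\Omega_M$; but $\abs{u_{\varepsilon_k}(x_{\varepsilon_k})}\ge\tfrac12 M_N(\varepsilon_k)$ is not rapidly decreasing, contradicting $u(\widetilde x)=0$. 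This step uses only temperedness, not the shape of $\Omega$.

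\textbf{Step 2 (bootstrapping the order, using the box).} It remains to upgrade order-$0$ negligibility of $(u_\varepsilon)_\varepsilon$ to negligibility of all derivatives. Here the box hypothesis enters through the elementary fact that $\Omega=\prod_i(a_i,b_i)$ has a \emph{uniform one-sided thickness}: there is $\rho_0>0$ such that for every $x\in\Omega$ and every coordinate direction $j$ at least one of the segments $x\pm[0,\rho_0]e_j$ lies in $\Omega$. I would prove by induction on $\abs{\alpha}$ that each $\partial^\alpha u_\varepsilon$ satisfies an order-$0$ negligibility estimate (with a weight exponent growing with $\abs{\alpha}$). For the step from $\alpha$ to $\alpha+e_j$, at a point $x\in\Omega$ take such a segment, of length $h:=\varepsilon^{M}$ with $M$ large (legitimate once $\varepsilon^M<\rho_0$), and use the one-sided Taylor identity $\partial_j(\partial^\alpha u_\varepsilon)(x)=h^{-1}\bigl(\partial^\alpha u_\varepsilon(x+he_j)-\partial^\alpha u_\varepsilon(x)\bigr)-\tfrac{h}{2}\,\partial_j^2\partial^\alpha u_\varepsilon(x+\theta he_j)$ for some $\theta\in(0,1)$: the first term is $h^{-1}$ times an order-$0$-negligible quantity, hence still rapidly decreasing since a fixed power of $\varepsilon^{-1}$ is harmless; the second term is $\varepsilon^{M}$ times a moderate quantity, hence rapidly decreasing for $M$ large; and the polynomial weight is absorbed via $(1+\abs{x+te_j})\le 2(1+\abs{x})$ for $0\le t\le h\le 1$. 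Running the induction over all $\alpha$ gives $(u_\varepsilon)_\varepsilon\in\mathcal N_\tau(\Omega)$, i.e. $u=0$.

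\textbf{Expected main obstacle.} The real work is Step 2: one has to recognize that order-$0$ smallness of a tempered net does \emph{not} by itself control its derivatives (precisely what fails for domains with arbitrarily thin components), that the uniform one-sided thickness of a box is exactly what makes the Taylor bootstrap work, and one must carefully propagate the polynomial weight exponents through the induction; Step 1 and (i)$\Rightarrow$(ii) are then bookkeeping.
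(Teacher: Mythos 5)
The paper does not actually prove Theorem \ref{charpointvalues}: it is imported from \cite{MO} ("The following is established in \cite{MO}"), so there is no internal proof to compare yours against. Judged on its own, your argument is correct and follows the route that is standard for statements of this type (and that underlies \cite{MO}): the near-maximizer argument of Step 1 is sound --- the key point, which you identify correctly, is that taking the weight exponent $N=N_1+1$ one power above the moderateness weight forces the near-maximizing points $x_{\varepsilon_k}$ to be moderate, so that the hypothesis on point values applies to them --- and the Taylor bootstrap of Step 2, resting on the uniform one-sided thickness of a box, is precisely where the geometric hypothesis enters (and precisely what fails for the pathological open sets alluded to in the introduction). Two small points deserve explicit mention. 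First, in Step 2 the step length $h=\varepsilon^{M}$ must be chosen anew for each target exponent $q$ (any $M\geq q+N'$ works); this is legitimate only because the resulting weight exponent $N_{\alpha+e_j}$ is independent of $q$, so you should say so. Second, your reading of the definitions as controlling all derivatives is the intended one: the displayed formulas for $\mathcal E_{M,\tau}(\Omega)$ and $\mathcal N_\tau(\Omega)$ in the paper suppress the derivative estimates, and under the literal zeroth-order reading your Step 1 alone (which needs no assumption on the shape of $\Omega$) would already finish the proof, making Step 2 vacuous; under the standard reading both steps are needed and your proof is complete.
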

Finally, we shall need the notion of positivity and strictly non-negativity of generalized numbers. For a new characterization
of these properties we refer to \cite{algfound}.
\begin{definition}
$x\in\widetilde{\mathbb R}$ is called strictly non-zero (resp.\ strictly positive), if for each representative
$(x_\varepsilon)_\varepsilon$ of $x$ we have
\[
(\exists m\geq 0)(\exists \varepsilon_0\in (0,1])(\forall \varepsilon<\varepsilon_0, \vert x_\varepsilon\vert>\varepsilon^m (\mbox{resp}.\ x_\varepsilon>\varepsilon^m)).
\]
\end{definition}
\subsection*{Motivation of the paper}
It is well known that in the special algebra $\mathcal G^s(\Omega)$ based on an arbitrary open set $\Omega\subset\mathbb R^d$, generalized functions not only are uniquely determined by evaluation on so-called compactly supported generalized points, but also a point value characterization of invertibility as well as positivity
is available (\cite{algfound,MO}). Algebras of tempered generalized functions, $\mathcal G_\tau(\Omega)$, however, not always admit a point value characterization.
However, even if they do (see above Theorem \ref{charpointvalues}), invertibility has not yet been understood pointwise. This open problem is discussed in the following section and a negative answer is given for open boxes $\Omega$ in $\mathbb R^d$.
\section{Point-wise invertibility in $\mathcal G_\tau$}
To start with we state the basic lemma:
\begin{lemma}
Let $m\in\mathbb R$. The map $s_m:\mathcal G_\tau(\mathbb R^d)\rightarrow\mathcal G_\tau(\mathbb R^d)$
given by
\[
u\mapsto u_\varepsilon(\varepsilon^m x)+\mathcal N_\tau(\mathbb R^d)
\]
is well defined. Furthermore, $s_m$ is an algebra isomorphism for each $m$.
\end{lemma}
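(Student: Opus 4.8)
The plan is to verify three things in order: that $s_m$ sends moderate nets to moderate nets and negligible nets to negligible nets (so that it descends to the quotient), that it is compatible with the algebra operations, and that $s_{-m}$ furnishes a two-sided inverse.

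For well-definedness, fix a representative $(u_\varepsilon)_\varepsilon$ and set $v_\varepsilon(x):=u_\varepsilon(\varepsilon^m x)$. Each $v_\varepsilon$ is smooth, being the composition of $u_\varepsilon$ with the linear dilation $x\mapsto\varepsilon^m x$, which maps $\mathbb{R}^d$ onto $\mathbb{R}^d$ --- this is exactly why the statement concerns $\mathbb{R}^d$ rather than a proper box. The key elementary estimate is
\[
1+\varepsilon^m\vert x\vert\;\le\;\varepsilon^{-\max(0,-m)}\,(1+\vert x\vert)\qquad(\varepsilon\in(0,1],\ x\in\mathbb{R}^d),
\]
which for $m\ge 0$ is merely $\varepsilon^m\le 1$ and for $m<0$ uses $1\le\varepsilon^m$. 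Inserting this into a moderateness bound $\vert u_\varepsilon(x)\vert\le C\varepsilon^{-N}(1+\vert x\vert)^N$ yields $\vert v_\varepsilon(x)\vert\le C\varepsilon^{-N(1+\max(0,-m))}(1+\vert x\vert)^N$, so $(v_\varepsilon)_\varepsilon\in\mathcal E_{M,\tau}(\mathbb{R}^d)$. The same computation with $C\varepsilon^{-N}$ replaced by $C_p\varepsilon^p$ shows $(v_\varepsilon)_\varepsilon\in\mathcal N_\tau(\mathbb{R}^d)$ whenever $(u_\varepsilon)_\varepsilon$ is negligible: the weight exponent $N$ is unchanged and the power of $\varepsilon$ only shifts by the fixed quantity $N\max(0,-m)$, which is absorbed by the universal quantifier over $p$ in the definition of $\mathcal N_\tau$. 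Hence $s_m$ passes to the quotient.

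Next, since $s_m$ is induced on representatives by the substitution $u_\varepsilon\mapsto u_\varepsilon\comp(\varepsilon^m\,\mathrm{id})$, and pointwise sum and product of the $u_\varepsilon$ commute with substitution, $s_m$ is $\mathbb{R}$-linear and multiplicative and fixes the class of the constant net $1$; thus it is a unital algebra homomorphism.

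Finally, everything above applies verbatim with $m$ replaced by $-m$, so $s_{-m}$ is likewise a well-defined unital algebra homomorphism on $\mathcal G_\tau(\mathbb{R}^d)$. For any representative one has $u_\varepsilon(\varepsilon^{-m}(\varepsilon^m x))=u_\varepsilon(x)$ and symmetrically, hence $s_{-m}\comp s_m=\mathrm{id}=s_m\comp s_{-m}$; so $s_m$ is bijective and therefore an algebra isomorphism. I expect no genuine obstacle here: the only point requiring care is the bookkeeping of the weight $(1+\vert x\vert)^N$ under the dilation --- the displayed estimate --- since for $m<0$ the dilation is expanding and one must absorb an extra negative power of $\varepsilon$ into a larger moderateness exponent.
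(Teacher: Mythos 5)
The paper states this lemma without giving any proof, so there is nothing to compare against; judged on its own, your argument is correct and complete relative to the definitions as the paper writes them. The dilation estimate $1+\varepsilon^m\vert x\vert\le\varepsilon^{-\max(0,-m)}(1+\vert x\vert)$ is exactly the right bookkeeping device: it shows that both the moderateness and the negligibility bounds survive the substitution $x\mapsto\varepsilon^m x$ with the weight exponent $N$ unchanged and only a fixed shift in the power of $\varepsilon$, which is harmless for moderateness and is absorbed by the quantifier over $p$ for negligibility. The homomorphism property and the identification of $s_{-m}$ as a two-sided inverse are immediate, as you say.

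One small caveat: the paper's displayed definition of $\mathcal E_{M,\tau}$ and $\mathcal N_\tau$ mentions only the functions themselves, but the standard definition of the tempered algebra (and the paper's own usage, e.g.\ the derivative estimates in the proof of Proposition 4.3) requires the corresponding bounds for all derivatives $\partial^\alpha u_\varepsilon$. If that is the intended definition, you should add the one-line observation that $\partial^\alpha\bigl(u_\varepsilon(\varepsilon^m\,\cdot)\bigr)=\varepsilon^{m\vert\alpha\vert}(\partial^\alpha u_\varepsilon)(\varepsilon^m\,\cdot)$, so each derivative picks up only the additional fixed factor $\varepsilon^{m\vert\alpha\vert}$, which is again absorbed exactly as in your zero-order estimate. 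With that remark included, the proof is complete in either reading.
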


\begin{theorem}
Let $u\in\mathcal G_\tau(\Omega)$. The following are equivalent:
\begin{enumerate}
\item For all $\widetilde x\in\widetilde{\Omega}$, $u(\widetilde x)$ is invertible.
\item For each $m\geq 0$, $s_m(u)$ is strictly non-zero on the unit ball, that is,
for each $m\geq 0$ there exists $N(m)\geq 0$ such that
\[
\inf_{\vert x\vert\leq 1}\vert u_\varepsilon(\varepsilon^m x)\vert\geq \varepsilon^{N(m)}
\]
for sufficiently small $\varepsilon$.
\end{enumerate}
\end{theorem}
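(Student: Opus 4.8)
The plan is to rest the argument on two standard facts plus one gluing construction. Fact one: an element of $\widetilde{\mathbb R}$ is invertible exactly when it is strictly non-zero in the sense of the Definition above. Fact two: by the preceding Lemma each $s_m$ is well defined, with representatives $(s_m u)_\varepsilon(x)=u_\varepsilon(\varepsilon^m x)$, so that the behaviour of $u$ on the ball of radius $\varepsilon^m$ about the origin is literally the behaviour of $s_m(u)$ on the fixed unit ball. Granting fact one, (1) unwinds to: for every $\widetilde x\in\widetilde\Omega$, some (equivalently every) representing net $(u_\varepsilon(x_\varepsilon))_\varepsilon$ obeys a polynomial lower bound for small $\varepsilon$; and (2) is that bound made uniform over the unit ball, one exponent $N(m)$ per scale. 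I would prove $(2)\Rightarrow(1)$ directly and $(1)\Rightarrow(2)$ by contraposition with a diagonal gluing.

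For $(2)\Rightarrow(1)$: let $\widetilde x\in\widetilde\Omega$ have representative $(x_\varepsilon)_\varepsilon\in\Omega_M$, so $|x_\varepsilon|=O(\varepsilon^{-N})$ for some $N$. Choose a scale $m$, dictated by $N$, so that $y_\varepsilon:=\varepsilon^{-m}x_\varepsilon$ satisfies $|y_\varepsilon|\le 1$ for all small $\varepsilon$; this is legitimate since the Lemma furnishes $s_m$ for every real $m$. Then $u_\varepsilon(x_\varepsilon)=(s_m u)_\varepsilon(y_\varepsilon)$, and (2) at scale $m$ yields $|u_\varepsilon(x_\varepsilon)|\ge\inf_{|y|\le1}|(s_m u)_\varepsilon(y)|\ge\varepsilon^{N(m)}$ for small $\varepsilon$. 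So $u(\widetilde x)$ is strictly non-zero, hence invertible; independence of the chosen representatives follows from well-definedness of point evaluation (recalled in the Preliminaries) and from the fact that strict non-vanishing need only be checked on one representative.

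For $(1)\Rightarrow(2)$: suppose (2) fails at some scale $m$. Negating its quantifiers gives a null sequence $\varepsilon_k\downarrow 0$ and, by compactness of the unit ball, points $y_k$ with $|y_k|\le 1$ such that $|(s_m u)_{\varepsilon_k}(y_k)|<\varepsilon_k^{\,k}$. Glue these into a net: $z_\varepsilon:=\varepsilon^m y_k$ when $\varepsilon=\varepsilon_k$, and $z_\varepsilon:=0$ otherwise (a ball about the origin lies in $\Omega$, as the expressions in (2) presuppose). Since $|z_{\varepsilon_k}|\le\varepsilon_k^{\,m}$ is polynomially bounded, $(z_\varepsilon)_\varepsilon\in\Omega_M$ and defines $\widetilde z\in\widetilde\Omega$; and $|u_{\varepsilon_k}(z_{\varepsilon_k})|=|(s_m u)_{\varepsilon_k}(y_k)|<\varepsilon_k^{\,k}$ for all $k$ shows that $(u_\varepsilon(z_\varepsilon))_\varepsilon$ violates every polynomial lower bound, so $u(\widetilde z)$ is not strictly non-zero, hence not invertible, contradicting (1).

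The step I expect to be the real obstacle is the contrapositive in $(1)\Rightarrow(2)$: one must negate ``$\exists\,N(m)\ \ldots$ for sufficiently small $\varepsilon$'' correctly so as to extract a single null sequence along which all polynomial lower bounds fail at once (whence the diagonal $\varepsilon_k^{\,k}$), and then verify that the glued net is genuinely moderate in $\Omega_M$ at the scale used and that no change of representative of $u$ restores strict non-vanishing along it. The rest --- the Lemma (well-definedness of $s_m$ and its representative formula), well-definedness of point values, and ``invertible $\Leftrightarrow$ strictly non-zero'' in $\widetilde{\mathbb R}$ --- is recalled in the Preliminaries or routine.
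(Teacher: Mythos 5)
The paper states this theorem without proof, so there is nothing of the author's to compare your argument against; I can only assess it on its own terms. Your overall strategy --- reduce invertibility in $\widetilde{\mathbb R}$ to strict non-vanishing, absorb each moderate point into a rescaled unit ball for one implication, and manufacture a generalized point by diagonal gluing for the other --- is the natural one, and your $(1)\Rightarrow(2)$ direction is essentially complete: the diagonal extraction of $\varepsilon_k$ with $\inf_{\vert x\vert\le 1}\vert u_{\varepsilon_k}(\varepsilon_k^m x)\vert<\varepsilon_k^k$, the moderateness of the glued net, and the representative-independence of strict non-vanishing are all handled correctly (compactness of the ball is not even needed: an infimum below $\varepsilon_k^k$ already yields a witness $y_k$).

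The gap is in $(2)\Rightarrow(1)$, and it exposes what is almost certainly a sign error in the statement itself. You set $y_\varepsilon:=\varepsilon^{-m}x_\varepsilon$ and want $\vert y_\varepsilon\vert\le 1$; since $\vert x_\varepsilon\vert$ may grow like $\varepsilon^{-N}$, this forces $m\le -N-1<0$, and then ``(2) at scale $m$'' is not among your hypotheses, which quantify only over $m\ge 0$. This cannot be repaired within the literal statement: for $m\ge 0$ the quantity $\inf_{\vert x\vert\le 1}\vert u_\varepsilon(\varepsilon^m x)\vert$ only involves points $\vert y\vert\le\varepsilon^m\le 1$, so every instance of (2) is already implied by its $m=0$ instance and says nothing about $u$ outside the unit ball; a net equal to $1$ near the origin but vanishing at $x_\varepsilon=\varepsilon^{-1}$ satisfies (2) yet violates (1). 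The exponent must be read as $\varepsilon^{-m}$, so that the balls $\vert y\vert\le\varepsilon^{-m}$ exhaust the moderate points --- as the remark following the theorem and the estimates on $\vert x\vert<\varepsilon^{-j}$ in Proposition \ref{counterexample1} confirm. With that correction your argument closes: take $m=N+1$, note $\vert x_\varepsilon\vert\le\varepsilon^{-m}$ for small $\varepsilon$, and conclude $\vert u_\varepsilon(x_\varepsilon)\vert\ge\varepsilon^{N(m)}$. You should say explicitly that you are proving this corrected version, since the implication $(2)\Rightarrow(1)$ is false for the statement as printed, and your appeal to ``the Lemma furnishes $s_m$ for every real $m$'' conflates well-definedness of the map $s_m$ with availability of the hypothesis at negative scales.
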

Hence we have translated point-wise invertibility of $u$ into a countable number of local conditions of
tempered generalized functions derived from $u$, namely of $s_m(u)$,  $m\in\mathbb N_0$. The following section is dedicated to showing
that these local conditions indeed do not suffice to guarantee invertibility of $u$ in $\mathcal G_\tau(\mathbb R^d)$.
\section{Global invertibility cannot be characterized point-wise}
We begin by characterizing invertibility of generalized functions by
evaluation at generalized points for bounded domains $\Omega$.
\begin{lemma}\label{lemmabounded}
Let $\Omega\subset \mathbb R^d$ be a bounded box, $u\in\mathcal G_\tau(\Omega)$. The following are equivalent:
\begin{enumerate}
\item \label{propertyone} $u$ is invertible in  $u\in\mathcal G_\tau(\Omega)$,
\item \label{propertytwo} For all $\widetilde x\in\widetilde{\Omega}$, $u(\widetilde x)$ is invertible.
\end{enumerate}
\end{lemma}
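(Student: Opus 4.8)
The plan is to dispatch (i) $\Rightarrow$ (ii) directly and to obtain (ii) $\Rightarrow$ (i) by first reducing invertibility of $u$ to a uniform lower bound on one representative and then extracting that bound from the pointwise hypothesis by a contradiction argument; boundedness of $\Omega$ will be the feature that makes it work. For (i) $\Rightarrow$ (ii): if $uv=1$ in $\mathcal G_\tau(\Omega)$, then for each $\widetilde x\in\widetilde\Omega$, point evaluation being an algebra homomorphism into $\widetilde{\mathbb R}$ gives $u(\widetilde x)\,v(\widetilde x)=(uv)(\widetilde x)=1$, so $u(\widetilde x)$ is invertible.

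For (ii) $\Rightarrow$ (i), fix a representative $(u_\varepsilon)_\varepsilon$ of $u$; the core claim is that there exist $m\ge 0$ and $\varepsilon_0\in(0,1]$ with $\inf_{x\in\Omega}\abs{u_\varepsilon(x)}\ge\varepsilon^{m}$ for all $\varepsilon<\varepsilon_0$. Granting it, $u_\varepsilon$ has no zeros on $\Omega$ for $\varepsilon<\varepsilon_0$, so $v_\varepsilon:=1/u_\varepsilon\in\Cnt\infty(\Omega)$ there; since $\Omega$ is bounded the weight $(1+\abs x)^{N}$ is bounded on $\Omega$, and $\sup_{x\in\Omega}\abs{v_\varepsilon(x)}\le\varepsilon^{-m}$ then shows that $(v_\varepsilon)_\varepsilon$, extended arbitrarily to $\varepsilon\in[\varepsilon_0,1]$, belongs to $\mathcal E_{M,\tau}(\Omega)$ (the corresponding estimates for derivatives, if part of the definition in use, follow by writing $\partial^\alpha v_\varepsilon$ as a polynomial in $v_\varepsilon$ and finitely many derivatives of $u_\varepsilon$). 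As $u_\varepsilon v_\varepsilon-1$ vanishes for $\varepsilon<\varepsilon_0$, the class $v$ of $(v_\varepsilon)_\varepsilon$ satisfies $uv=1$, giving (i).

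The remaining task, which is the heart of the proof, is the core claim, and I would prove it by contradiction. If it fails, then for every $k\in\mathbb N$ there is arbitrarily small $\varepsilon$ with $\inf_{x\in\Omega}\abs{u_\varepsilon(x)}<\varepsilon^{k}$; choosing inductively a strictly decreasing null sequence $(\varepsilon_k)_k$ with $\inf_{x\in\Omega}\abs{u_{\varepsilon_k}(x)}<\varepsilon_k^{k}$ and then $\xi_k\in\Omega$ with $\abs{u_{\varepsilon_k}(\xi_k)}<\varepsilon_k^{k}$, set $x_{\varepsilon_k}:=\xi_k$ and $x_\varepsilon:=x_0$ for $\varepsilon\notin\{\varepsilon_k:k\in\mathbb N\}$, where $x_0\in\Omega$ is fixed. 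Boundedness of $\Omega$ makes $(x_\varepsilon)_\varepsilon$ a bounded, hence moderate, net in $\Omega$, so it defines $\widetilde x\in\widetilde\Omega$. For any $m\ge 0$ and any $\delta>0$, picking $k\ge m$ with $\varepsilon_k<\min(\delta,1)$ yields $\varepsilon:=\varepsilon_k<\delta$ with $\abs{u_\varepsilon(x_\varepsilon)}<\varepsilon_k^{k}\le\varepsilon^{m}$; hence the representative $(u_\varepsilon(x_\varepsilon))_\varepsilon$ of $u(\widetilde x)$ is not strictly non-zero, so $u(\widetilde x)$ is not invertible in $\widetilde{\mathbb R}$ (using the standard equivalence between invertibility and strict non-zeroness of generalized numbers, cf.\ \cite{algfound}), contradicting (ii).

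The hard part is this last construction: one must exhibit a single generalized point that simultaneously witnesses non-invertibility for \emph{every} exponent $m$, and one must check that the net assembled from near-minimizers of $\abs{u_\varepsilon}$ really is a moderate generalized point of $\Omega$ --- which is exactly where boundedness of $\Omega$ is indispensable (for an unbounded box such near-minimizers could run off to infinity faster than any power of $\varepsilon^{-1}$, destroying moderateness). By contrast, temperedness of $1/u_\varepsilon$ and multiplicativity of point evaluation are routine verifications.
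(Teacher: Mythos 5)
Your proof is correct and follows essentially the same route as the paper: both arguments reduce invertibility of $u$ to a uniform lower bound $\inf_{x\in\Omega}|u_\varepsilon(x)|\geq\varepsilon^{m}$ and refute its failure by assembling a moderate generalized point from near-minimizers $\xi_k$ with $|u_{\varepsilon_k}(\xi_k)|<\varepsilon_k^{k}$, using boundedness of $\Omega$ for moderateness of the point and concluding that $u(\widetilde x)$ is not strictly non-zero. You are in fact somewhat more complete than the paper, which asserts without proof that non-invertibility of $u$ forces the failure of the uniform lower bound, whereas you verify explicitly that the lower bound yields a tempered moderate inverse $1/u_\varepsilon$ on the bounded box.
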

\begin{proof}
(\ref{propertyone})$\Rightarrow$(\ref{propertytwo}) is evident. To
show (\ref{propertytwo})$\Rightarrow$(\ref{propertyone}), we observe
first that $\widetilde{\Omega}=\widetilde{\Omega}$, since $\Omega$
is bounded. Assume, by contradiction that $u$ is not invertible. Let
$(u_\varepsilon)_\varepsilon$ be a representative of $u$. Since $u$
is not invertible, $u_\varepsilon$ cannot be bounded from below by a
fixed power of $\varepsilon$. Hence, there exists
$(x_k)_k\in\Omega^{\mathbb N_0}$ and $\varepsilon_k\rightarrow 0$
such that $\vert u_{\varepsilon_k}(x_k)\vert<\varepsilon_k^k$.
Define a generalized point $\widetilde x$ via its net
$(x_\varepsilon)_\varepsilon$ by $x_\varepsilon:=x_k$ whenever
$\varepsilon\in(\varepsilon_{k},\varepsilon_{k-1}]$, $k\geq 1$. Then
$u(\widetilde x)$ is not invertible in $\widetilde{\mathbb R}$,
since it is not strictly non-zero. This contradicts
(\ref{propertytwo}) and we are done.
\end{proof}
The main aim of this section is to show:
\begin{theorem}\label{counterexample}
$\mathcal G_\tau(\mathbb R^d)$ does not admit a point value
characterization of invertibility: there exist non-invertible
functions $u\in\mathcal G_\tau(\mathbb R^d)$ such that $u(\widetilde
x)$ is invertible for all $\widetilde x\in\widetilde{\Omega}$.
\end{theorem}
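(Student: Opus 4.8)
The plan is to construct an explicit net $(u_\eps)_\eps$ on $\R^d$ that witnesses the failure of the point-value characterization of invertibility. By Lemma~\ref{lemmabounded}, the obstruction must come from the unboundedness of $\R^d$: locally (on every bounded box, in particular on every ball of the form $\{|x|\leq \eps^{-M}\}$) the function should look invertible with a bound depending on the radius, but no single power of $\eps$ can bound $|u_\eps|$ from below uniformly on all of $\R^d$. The natural candidate is a function that at ``spatial infinity'' develops a zero of depth growing faster than any $\eps^{-k}$ allows to control; concretely, one wants $u_\eps(x)$ to dip down to a value like $\eps^{1/\eps}$ (or $e^{-1/\eps}$, or more simply $\eps^{\,g(|x|)}$ with $g$ unbounded) in a region $|x|\sim 1/\eps$, while staying, say, identically $1$ on $|x|\leq 1/(2\eps)$. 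One should choose a fixed smooth bump $\varphi$ with $\varphi\equiv 1$ near $0$, $\operatorname{supp}\varphi\subset\{|x|\le 1\}$, $0\le\varphi\le 1$, and set
\[
u_\eps(x):=1-\bigl(1-\eps^{1/\eps}\bigr)\,\varphi\!\left(\eps x - v\right)
\]
for a fixed unit vector $v$ (so the dip is centred at $x=v/\eps$, which escapes to infinity). One then checks: (a) $(u_\eps)_\eps\in\mathcal E_{M,\tau}(\R^d)$, indeed $0<\eps^{1/\eps}\le u_\eps\le 1$ and all derivatives of $u_\eps$ carry only powers of $\eps$ from the chain rule, so moderateness is immediate; and $(u_\eps)_\eps\notin\mathcal N_\tau(\R^d)$ since $u_\eps$ is bounded below by a constant on a non-trivial region, so $u$ is a genuine nonzero element of $\mathcal G_\tau(\R^d)$.

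Next I would show $u$ is not invertible: if it were, there would be $N$ and $\eps_0$ with $\inf_{x\in\R^d}|u_\eps(x)|\ge \eps^{N}$ for $\eps<\eps_0$; but evaluating at the point $x=v/\eps$ gives $u_\eps(v/\eps)=\eps^{1/\eps}$, and $\eps^{1/\eps}<\eps^{N}$ for all small $\eps$ (since $1/\eps>N$), a contradiction. (Here one uses the standard fact that invertibility of $u$ in $\mathcal G_\tau(\Omega)$ is equivalent to some representative being bounded below by a fixed power of $\eps$ uniformly on $\Omega$ — this is exactly the ``strict non-zero'' condition appearing in the proof of Lemma~\ref{lemmabounded}.)

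The heart of the argument — and the step I expect to be the main obstacle — is showing that $u(\widetilde x)$ is invertible in $\widetilde{\R}$ for \emph{every} $\widetilde x\in\widetilde\Omega$. Fix a representative $(x_\eps)_\eps$ with $|x_\eps|=O(\eps^{-M})$ for some $M\ge 0$. One must produce $m\ge 0$ and $\eps_1$ such that $|u_\eps(x_\eps)|\ge\eps^{m}$ for $\eps<\eps_1$. The point is that the only place where $u_\eps$ is small is the shell where $\eps x_\eps-v$ lies in $\operatorname{supp}\varphi$, i.e.\ $|x_\eps - v/\eps|\le 1/\eps$, which forces $|x_\eps|$ to be of order $1/\eps$ — \emph{but even then} $u_\eps(x_\eps)\ge \eps^{1/\eps}$, which is certainly $\ge \eps^m$ eventually for \emph{any} fixed $m$ only fails, so one needs to be more careful: in fact $\eps^{1/\eps}$ is \emph{smaller} than every $\eps^m$, so this crude bound is not enough. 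The resolution is that a single generalized point $\widetilde x$ cannot track the moving dip: since $|x_\eps|=O(\eps^{-M})$ and the dip is located at $|x|\approx 1/\eps\cdot|v|$, for the point to sit near the bottom of the dip one needs $|x_\eps|\ge c/\eps$, which is fine ($M\ge 1$ suffices), so this naive escape does not work either. The correct mechanism must instead be: \emph{widen and flatten} the dip's ``transition layer'' relative to how fast a moderate point can approach its centre. Thus I would redesign $u_\eps$ so that the near-$\eps^{1/\eps}$ values are attained only on an extremely thin set — e.g.\ replace the plateau-bottomed bump by one whose minimum $\eps^{1/\eps}$ is achieved only at the single point $x=v/\eps$, with $u_\eps(x)\ge \eps^{N}$ as soon as $|x-v/\eps|\ge \eps^{K}$ for suitable fixed $N,K$. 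Then a moderate point, whose net satisfies only $|x_\eps|=O(\eps^{-M})$, either stays (cofinally) at distance $\ge\eps^K$ from $v/\eps$ — giving $|u_\eps(x_\eps)|\ge\eps^{N}$ on a cofinal set, hence by the usual ``$\sim$ negligible off a cofinal set'' argument one patches to get strict non-zeroness of the value — or it approaches $v/\eps$ faster than $\eps^K$, but that is still allowed, and there $u_\eps(x_\eps)$ can be as small as $\eps^{1/\eps}$. Hence even this does not immediately close; the genuine construction (which is the real content of the theorem) must exploit that a \emph{single} net $x_\eps$ can be close to $v/\eps$ for only a ``slowly varying'' set of $\eps$ while the modulus estimates are required to hold for \emph{all} sufficiently small $\eps$ — so one builds $u_\eps$ with a \emph{sequence} of dips at locations $v_k/\eps$ that a fixed moderate point can meet at most finitely-essentially, yet which collectively prevent a uniform lower bound. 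I expect the bookkeeping of ``a moderate point can fall into at most [controlled] many of these dips'' to be the delicate part; once that combinatorial/asymptotic lemma is in hand, invertibility at every generalized point follows by choosing $m$ larger than the worst dip the given point actually enters, and non-invertibility of $u$ follows by evaluating $u_\eps$ at the centres of ever-deeper dips along a sequence $\eps_k\to 0$, exactly as in the proof of Lemma~\ref{lemmabounded}.
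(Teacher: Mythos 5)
There is a genuine gap: you never arrive at a working construction, and the mechanism you propose to close it would not work. You correctly diagnose that a single dip of depth $\varepsilon^{1/\varepsilon}$ centred at $v/\varepsilon$ fails, because the net $x_\varepsilon:=v/\varepsilon$ is itself a moderate point sitting at the bottom of the dip for every $\varepsilon$, so $u(\widetilde x)=[(\varepsilon^{1/\varepsilon})_\varepsilon]$ is not invertible in $\widetilde{\mathbb R}$. But the same objection kills every variant you then sketch: any dip whose depth lies below \emph{all} powers of $\varepsilon$ and whose centre is a moderate net is entered, for all $\varepsilon$, by the moderate point that tracks its centre exactly. No thinning of the dip and no combinatorial bookkeeping about ``how many dips a point can meet'' can repair this, because a single point suffices to witness the failure. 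A second, related slip: your non-invertibility criterion (``some representative bounded below by a fixed power of $\varepsilon$ uniformly on $\Omega$'') is the one appropriate to \emph{bounded} domains as in Lemma~\ref{lemmabounded}; on $\mathbb R^d$ moderateness of a putative inverse only forces $|u_\varepsilon(x)|\gtrsim \varepsilon^{N}(1+|x|)^{-N}$, so to rule out invertibility the dip at radius $r$ must undercut $\varepsilon^{N}(1+r)^{-N}$ for every $N$, i.e.\ its exponent must grow \emph{superlinearly} in the exponent of the radius.

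The paper's resolution is quantitative rather than combinatorial: it takes $u_\varepsilon(x)=(1-\sigma(x))\varepsilon^{(\log_\varepsilon|x|)^2}+\sigma(x)$. On the shell $\varepsilon^{-j}\leq|x|<\varepsilon^{-j-1}$ this is pinched between $\varepsilon^{(j+1)^2}$ and $\varepsilon^{j^2}$, so every moderate point, living in the region $j\leq N$ for some fixed $N$, sees the uniform lower bound $\varepsilon^{(N+1)^2}$ --- a \emph{fixed} power of $\varepsilon$ --- and hence $u(\widetilde x)$ is invertible at every $\widetilde x$. Yet the exponent $j^2$ grows quadratically in $j$ while a tempered-moderate inverse could compensate only an exponent linear in $j$ (namely $N(1+2j)$ at $|x|=\varepsilon^{-j}$); evaluating at $x=\varepsilon^{-j}$ with $j=3N+1$ then contradicts $u_\varepsilon v_\varepsilon=1+n_\varepsilon$. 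This ``depth a fixed power of $\varepsilon$ on each moderate scale, but with exponent growing superlinearly across scales'' is precisely the idea missing from your proposal.
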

Before we provide a proof of this statement, we investigate the underlying counterexample:
\begin{proposition}\label{counterexample1}
Let $\sigma\in C^{\infty}(\mathbb R^d)$ be a function such that
$1-\sigma$ is a cutoff at $x=0$,  $\sigma=1$ on $\vert x\vert\leq
1/2$ and $\sigma=0$ on $\vert x\vert\geq 1$. For each
$(x,\varepsilon)\in\mathbb R^d\times (0,1]$ define
\[
u_\varepsilon(x):=(1-\sigma(x))g_\varepsilon(x)+\sigma(x),
\]
with $g_\varepsilon(x):=\varepsilon^{(\log_\varepsilon(\vert x\vert))^2}$. Then we have the following:
\begin{enumerate}
\item \label{prop1} $(u_\varepsilon)_\varepsilon$ is moderate, that is, $(u_\varepsilon)_\varepsilon\in\mathcal E_{M,\tau}(\mathbb R^d)$.
\item \label{prop2} $(u_\varepsilon)_\varepsilon\notin\mathcal N_\tau(\mathbb R^d)$.
\item \label{prop3} $(u_\varepsilon)_\varepsilon$ is strictly non-zero  for all $x$ in $\vert x\vert<\varepsilon^{-j}$ for $j=0,1,2,\dots$, more precisely,
the following three estimates hold
\begin{equation}\label{61}
(\forall  \varepsilon\neq 1)(\forall j\geq 0)(\forall
x:\varepsilon^{-j}\leq \vert
x\vert<\varepsilon^{-j-1})(\varepsilon^{(j+1)^2}<\vert
u_\varepsilon(x)\vert=u_\varepsilon(x)\leq \varepsilon^{j^2}),
\end{equation}
\begin{equation}\label{62}
(\forall \varepsilon>0)(\forall x,\,\vert x\vert\leq
1/2)(u_\varepsilon(x)=1),
\end{equation}
\begin{equation}\label{63}
(\forall \varepsilon<1/2)(\forall x,1/2\leq\vert x\vert\leq 1)(1\leq
g_\varepsilon(x)<2,\; 1\leq u_\varepsilon(x)<3).
\end{equation}
\end{enumerate}
\end{proposition}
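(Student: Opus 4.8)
The plan is to derive all of (\ref{prop1})--(\ref{63}) from elementary estimates on the auxiliary net $g_\varepsilon$, using that $u_\varepsilon$ is the ``convex combination'' $u_\varepsilon=(1-\sigma)g_\varepsilon+\sigma$ of $g_\varepsilon$ and the constant $1$, with weights $1-\sigma,\sigma\in[0,1]$ summing to $1$. The key preliminary remark is that for every $\varepsilon\in(0,1)$ and every $x\neq 0$ one has $0<g_\varepsilon(x)\le 1$, because the exponent $(\log_\varepsilon\vert x\vert)^2$ is nonnegative while the base $\varepsilon$ lies in $(0,1)$; hence $0<u_\varepsilon(x)\le 1$ everywhere. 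Smoothness is not an issue: $g_\varepsilon\in C^\infty(\mathbb R^d\setminus\{0\})$ as a composition of smooth maps, and $(1-\sigma)g_\varepsilon$ extends smoothly across the origin by $0$ because $1-\sigma\equiv 0$ on $\vert x\vert\le 1/2$; at the exceptional index $\varepsilon=1$, where $\log_\varepsilon$ is undefined, one sets $u_1\equiv 1$, which does not affect the class in $\mathcal G_\tau$. Now the uniform bound $\sup_x\vert u_\varepsilon(x)\vert\le 1$ is precisely moderateness (\ref{prop1}) with $N=0$; since $\sigma\equiv 1$ on $\vert x\vert\le 1/2$ we get $u_\varepsilon\equiv 1$ there, which is (\ref{62}); and $u_\varepsilon(0)=1$ for every $\varepsilon$ does not tend to $0$, whereas every element of $\mathcal N_\tau(\mathbb R^d)$ vanishes pointwise, so $(u_\varepsilon)_\varepsilon\notin\mathcal N_\tau(\mathbb R^d)$, i.e.\ (\ref{prop2}).

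The substance is (\ref{61}), which I would prove via the substitution $t=t_\varepsilon(x):=\log_\varepsilon\vert x\vert=\ln\vert x\vert/\ln\varepsilon$, so that $g_\varepsilon(x)=\varepsilon^{t^2}$. For $\varepsilon\in(0,1)$ the map $\vert x\vert\mapsto\log_\varepsilon\vert x\vert$ is strictly decreasing and sends $\varepsilon^{-j}$ to $-j$, so the shell $\varepsilon^{-j}\le\vert x\vert<\varepsilon^{-j-1}$ is carried to $-(j+1)<t\le-j$, i.e.\ to $j^2\le t^2<(j+1)^2$, and then, because $s\mapsto\varepsilon^s$ is strictly decreasing too, to $\varepsilon^{(j+1)^2}<g_\varepsilon(x)\le\varepsilon^{j^2}$. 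For every $j\ge 0$ the lower endpoint $\varepsilon^{-j}\ge 1$, so the whole shell lies in $\{\vert x\vert\ge 1\}$, where $\sigma=0$ and hence $u_\varepsilon=g_\varepsilon>0$; thus the displayed chain of inequalities is exactly (\ref{61}).

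Finally, (\ref{63}) is an easier variant of the same computation, carried out on the annulus $1/2\le\vert x\vert\le 1$ for $\varepsilon<1/2$: there $t=\log_\varepsilon\vert x\vert\in[0,\log_\varepsilon(1/2)]$, and the role of the hypothesis $\varepsilon<1/2$ is exactly to force $\log_\varepsilon(1/2)=\ln 2/\vert\ln\varepsilon\vert<1$, whence the exponent $t^2$ stays in $[0,1)$ and $g_\varepsilon(x)=\varepsilon^{t^2}$ is squeezed between fixed positive constants --- explicitly between $\exp\!\big(-(\ln 2)^2/\vert\ln\varepsilon\vert\big)>\tfrac12$ and $1$ --- and feeding these two-sided bounds on $g_\varepsilon$ into the convex combination $u_\varepsilon=(1-\sigma)g_\varepsilon+\sigma$ yields the corresponding bounds for $u_\varepsilon$, which is (\ref{63}). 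I do not foresee a genuine obstacle: the whole proposition is bookkeeping around the single identity $g_\varepsilon(x)=\varepsilon^{(\log_\varepsilon\vert x\vert)^2}$, and the one place that really requires care is keeping straight the three reversals of monotonicity in (\ref{61}) --- the decreasing base of $\log_\varepsilon$, the squaring, and the decreasing power map $s\mapsto\varepsilon^s$ --- so that the dyadic-in-$\varepsilon$ shells $\{\varepsilon^{-j}\le\vert x\vert<\varepsilon^{-j-1}\}$ line up cleanly with the exponent intervals $[j^2,(j+1)^2)$.
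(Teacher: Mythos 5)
Your handling of (\ref{prop2}), (\ref{61}) and (\ref{62}) matches the paper's argument in substance: the monotonicity bookkeeping you describe for (\ref{61}) is exactly the chain of implications the paper writes out, and for (\ref{prop2}) your evaluation at $x=0$ (where $u_\varepsilon(0)=1$ for all $\varepsilon$, while negligibility would force $|u_\varepsilon(0)|=O(\varepsilon^p)$ for all $p$) is if anything cleaner than the paper's evaluation at $|x|=\varepsilon^{-j}$. The genuine gap is in your proof of moderateness (\ref{prop1}). You declare the uniform bound $\sup_x|u_\varepsilon(x)|\le 1$ to be ``precisely moderateness with $N=0$'', i.e.\ you verify only the zero-order estimate. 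But membership in $\mathcal E_{M,\tau}$ for the tempered Colombeau algebra requires moderate bounds on \emph{all} derivatives $\partial^\alpha u_\varepsilon$; the displayed definition in the paper suppresses the derivatives, but the paper's own proof of (\ref{prop1}) makes clear that this is what is meant --- it spends essentially all of its effort on the derivative estimates, computing $\partial_i g_\varepsilon(x)=2g_\varepsilon(x)\,x_i\log(\vert x\vert)/(\vert x\vert^2\log\varepsilon)$ for $\vert x\vert\ge 1/2$, bounding it by a constant times $(1+\vert x\vert)^2\varepsilon^{-2}$, and asserting that higher orders go similarly. These bounds are not consequences of the sup bound on $u_\varepsilon$ itself: each differentiation of $g_\varepsilon$ brings in factors of $\log\vert x\vert/\log\varepsilon$ and inverse powers of $\vert x\vert$ that must be checked to be moderate. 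Without this step your proof of (\ref{prop1}) is incomplete.

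A second, smaller point concerns (\ref{63}). Your computation gives $\exp\bigl(-(\ln 2)^2/\vert\ln\varepsilon\vert\bigr)<g_\varepsilon(x)\le 1$, hence $1/2<g_\varepsilon(x)\le 1$ on the annulus for $\varepsilon<1/2$; this is correct, and it shows that the inequalities as printed in (\ref{63}), namely $1\le g_\varepsilon<2$ and $1\le u_\varepsilon<3$, have their direction reversed --- since $\varepsilon\in(0,1)$ and the exponent $(\log_\varepsilon\vert x\vert)^2$ is nonnegative, $g_\varepsilon$ can never exceed $1$. What survives, and what is actually needed elsewhere, is only that $g_\varepsilon$ and $u_\varepsilon$ are pinched between fixed positive constants on $1/2\le\vert x\vert\le 1$, which both your bounds and the paper's deliver; but you should state explicitly that you are proving this corrected form of (\ref{63}) rather than claiming that your two-sided bound ``is (\ref{63})'' as literally written.
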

\begin{proof}
First, it is clear that $g_\varepsilon$ is smooth away from zero,
hence $u_\varepsilon\in C^\infty(\mathbb R^d)$ for each
$\varepsilon\in(0,1]$. We start by proving (\ref{prop2}): For $j\geq
0$ we define $x_\varepsilon^{(j)}:=\varepsilon^{-j}$. We have
$g_\varepsilon(x_\varepsilon^{(j)})=\varepsilon^{j^2}$, hence
$u_\varepsilon\notin\mathcal N_\tau(\mathbb R^d)$. Proof of
(\ref{prop3}): First, we show (\ref{61}). Let $j\geq 0$, then we
have whenever $0<\varepsilon<1$
\begin{eqnarray}\\\nonumber
\varepsilon^{-j}&\leq&\vert x\vert<\varepsilon^{-j-1}\Rightarrow-j\log\varepsilon\leq\log(\vert x\vert)<-(j+1)\log\varepsilon\\\nonumber
\Rightarrow j&\leq& \frac{\log (\vert x\vert)}{-\log\varepsilon}=-\log_\varepsilon(\vert x\vert)<j+1\Rightarrow j^2\leq (\log_\varepsilon(\vert x\vert))^2<(j+1)^2\\\nonumber
\Rightarrow \varepsilon^{(j+1)^2}&<&g_\varepsilon(x)=u_\varepsilon(x)\leq \varepsilon^{j^2}.
\end{eqnarray}
And the last equality holds because $\vert x\vert >1$ for
$\varepsilon\neq 1$. Estimate (\ref{62}) follows directly from the
choice of the cutoff function $1-\sigma$. To see (\ref{63}), we
check that following implications hold for $\varepsilon<1/2$:
\begin{eqnarray}\\\nonumber
1/2&\leq&\vert x\vert\leq 1\Rightarrow-\log 2<\log(\vert x\vert)\leq 0\Rightarrow \frac{\log 2}{\log\varepsilon}<-\frac{\log(\vert x\vert)}{\log\varepsilon}\leq 0\\\nonumber
\Rightarrow -\left(\frac{\log 2}{\log \varepsilon}\right)^{\frac{1}{2}}&<&-\frac{\log(\vert x\vert)}{\log\varepsilon}\leq 0 \Rightarrow 1\leq g_\varepsilon(x)<2
\Rightarrow 1\leq u_\varepsilon(x)<3.
\end{eqnarray}
Thus we have finished the proof of (\ref{prop3}). Finally, we are prepared to show (\ref{prop1}). To prove that $(u_\varepsilon)_\varepsilon$ is moderate, it suffices to establish moderate estimates of the latter for $\vert x\vert\geq 1/2$ only, because $u_\varepsilon(x)=1$ on $\vert x\vert\leq 1/2$ according to (\ref{62}).
\begin{itemize}
\item  We start with the zero-order erstimates. By (\ref{61})--(\ref{63}), we have  for all $\varepsilon<1/2$ and for all $x\in\mathbb R^d$, $0<u_\varepsilon(x)<3$.
\item  Derivatives of first order: Let $i=1,\dots,d$. For $\vert x\vert\geq 1/2$ one has
\[
\partial_i g_\varepsilon(x)=2 g_\varepsilon(x)\frac{x_i\log(\vert x\vert)}{\vert x\vert ^2\log\varepsilon}
\]
Since $\vert g_\varepsilon(x)\vert <2$, and $\vert x\vert \geq 1/2$
we therefore have for sufficiently small $\varepsilon$,
\[
\vert\partial_i g_\varepsilon(x)\vert\leq 16 x_i\frac{\log (\vert x\vert)}{\log\varepsilon}<16 (1+\vert x\vert)^2\varepsilon^{-2},
\]
so we have derived moderate bounds for the first derivative.
\item Estimates for higher order derivatives can be obtained similarly.
\end{itemize}
\end{proof}
\begin{proof}[Proof of Theorem \ref{counterexample}]
We use $(u_\varepsilon)_\varepsilon$ as defined in the preceding statement, Proposition \ref{counterexample1}. According to the latter, $u:=[(u_\varepsilon)_\varepsilon]$ is a well defined element of $\mathcal G_\tau(\mathbb R^d)$. Assume now $u$ is invertible, that is there exists $(v_\varepsilon)_\varepsilon\in\mathcal E_{M,\tau}(\mathbb R^d)$ such that for some $N\geq 0$,
\begin{equation}\label{estimate1}
\vert v_\varepsilon(x)\vert\leq \varepsilon^{-N}(1+\vert x\vert)^N
\end{equation}
for sufficiently small $\varepsilon$ and that
\begin{equation}\label{lasteq}
u_\varepsilon(x) v_\varepsilon(x)=1+n_\varepsilon(x)
\end{equation}
for some $(n_\varepsilon)_\varepsilon\in\mathcal N_\tau(\mathbb R^d)$. By construction of $u_\varepsilon$, we have for each $j\geq 0$,
$u_\varepsilon(\varepsilon^{-j})=\varepsilon^{j^2}$. Hence by (\ref{lasteq}) and (\ref{estimate1}) we have
\begin{equation}
\vert u_\varepsilon(\varepsilon^{-j})v_\varepsilon(\varepsilon^{-j})\vert\leq \varepsilon^{j^2-N}(1+\varepsilon^{-j})^N\leq \varepsilon^{j^2-N(1+2j)}.
\end{equation}
Since $j$ is arbitrary, we may set $j=3N+1$, then
\begin{equation}
\vert u_\varepsilon(\varepsilon^{-j})v_\varepsilon(\varepsilon^{-j})\vert\leq\varepsilon^1.
\end{equation}
However, this contradicts (\ref{lasteq}), because by negligibility of $(n_\varepsilon)_\varepsilon$ we have\newline $\vert n_\varepsilon(\varepsilon^{-(3N+1)})\vert=O(\varepsilon^p)$ for all $p\geq 0$.
Therefore $u$ is not invertible and we are done.
\end{proof}

As a consequence of Lemma \ref{lemmabounded} and Theorem \ref{counterexample} we have the following:
\begin{corollary}
Let $\Omega\subseteq\mathbb R^d$ be a box. The following are equivalent:
\begin{enumerate}
\item \label{invcharx} Invertiblity can be characerized pointwise in $\mathcal G_\tau(\Omega)$.
\item \label{invchary} $\Omega$ is a bounded.
\end{enumerate}
\end{corollary}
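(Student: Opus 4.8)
The plan is to deduce the corollary from Lemma~\ref{lemmabounded} together with a routine extension of Theorem~\ref{counterexample} from $\mathbb R^d$ to arbitrary unbounded boxes. The implication (\ref{invchary})$\Rightarrow$(\ref{invcharx}) is immediate: for a bounded box $\Omega$, Lemma~\ref{lemmabounded} says precisely that $u\in\mathcal G_\tau(\Omega)$ is invertible if and only if $u(\widetilde x)$ is invertible for every $\widetilde x\in\widetilde\Omega$, which is the meaning of ``invertibility can be characterized pointwise'' (the left-to-right direction holding trivially in all cases). Thus the whole content lies in the contrapositive of (\ref{invcharx})$\Rightarrow$(\ref{invchary}): if $\Omega$ is an \emph{unbounded} box, I must exhibit a non-invertible $u\in\mathcal G_\tau(\Omega)$ all of whose point values $u(\widetilde x)$, $\widetilde x\in\widetilde\Omega$, are invertible.

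For this I would transplant the counterexample of Proposition~\ref{counterexample1} along an unbounded coordinate. Write $\Omega=\prod_{i=1}^d I_i$; some $I_i$ is unbounded, and after permuting coordinates and, if needed, applying a reflection $x_i\mapsto -x_i$ and a translation (each of which induces an algebra isomorphism between the corresponding $\mathcal G_\tau$-algebras, since these operations preserve the tempered moderate and negligible bounds), we may assume $I_1\in\{(0,\infty),\mathbb R\}$; for $i\ge 2$ fix a bounded open interval $J_i\csub I_i$ and a point $p_i\in J_i$. With a cutoff $\sigma$ in the single variable $x_1$ as in Proposition~\ref{counterexample1} ($\sigma=1$ near $0$, $\sigma=0$ for $\vert x_1\vert\ge 1$) and $g_\varepsilon(t):=\varepsilon^{(\log_\varepsilon\vert t\vert)^2}$, define
\[
u_\varepsilon(x):=(1-\sigma(x_1))\,g_\varepsilon(x_1)+\sigma(x_1),\qquad x\in\Omega .
\]
Since this net depends only on $x_1$, its $x_2,\dots,x_d$-derivatives vanish, so moderateness and non-negligibility in $\mathcal G_\tau(\Omega)$ follow from the one-variable estimates of Proposition~\ref{counterexample1}; the latter is witnessed by the points $(\varepsilon^{-j},p_2,\dots,p_d)\in\Omega$, at which $u_\varepsilon=\varepsilon^{j^2}$. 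Non-invertibility of $u=[(u_\varepsilon)_\varepsilon]$ is then the proof of Theorem~\ref{counterexample} run verbatim: from $u_\varepsilon v_\varepsilon=1+n_\varepsilon$ with $(v_\varepsilon)_\varepsilon$ moderate, $(n_\varepsilon)_\varepsilon$ negligible, evaluation at $(\varepsilon^{-j},p_2,\dots,p_d)$ (whose last $d-1$ coordinates are bounded) gives $\vert u_\varepsilon v_\varepsilon\vert\le\varepsilon^{j^2-N(1+2j)}\le\varepsilon$ for $j=3N+1$ and $\varepsilon$ small, contradicting $(n_\varepsilon)_\varepsilon\in\mathcal N_\tau(\Omega)$.

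The remaining point, and the only one needing a little care, is that every point value $u(\widetilde x)$ is invertible. Given a moderate representative $(x_\varepsilon)_\varepsilon$ of $\widetilde x$, say $\vert x_\varepsilon\vert=O(\varepsilon^{-M})$, the first component obeys $\vert x_{\varepsilon,1}\vert<\varepsilon^{-M-1}$ for small $\varepsilon$, so at each such $\varepsilon$ either $\vert x_{\varepsilon,1}\vert\le 1$ — where the one-variable forms of (\ref{62})--(\ref{63}) bound $u_\varepsilon(x_\varepsilon)$ from below by a fixed power of $\varepsilon$ — or $\varepsilon^{-j}\le\vert x_{\varepsilon,1}\vert<\varepsilon^{-j-1}$ with $0\le j\le M$, where (\ref{61}) gives $u_\varepsilon(x_\varepsilon)>\varepsilon^{(j+1)^2}\ge\varepsilon^{(M+1)^2}$. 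In either case $u_\varepsilon(x_\varepsilon)\ge\varepsilon^{(M+1)^2}$ for small $\varepsilon$, so $u(\widetilde x)$ is strictly positive, hence invertible. I do not expect a genuine obstacle: the decisive structural feature is that the cutoff sits at the \emph{fixed} location $x_1=0$, so a moderate point whose first component races to the boundary arbitrarily fast still lands in the region where $u_\varepsilon\equiv 1$ — which is exactly the reason Proposition~\ref{counterexample1} is built with a cutoff at $0$. Putting the two implications together yields the equivalence.
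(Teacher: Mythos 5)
Your proposal is correct and follows essentially the same route as the paper: the bounded direction via Lemma~\ref{lemmabounded}, and the unbounded direction by transplanting the counterexample of Proposition~\ref{counterexample1} along an unbounded coordinate after normalizing by permutation, translation and reflection. The only (harmless) differences are that you treat $\Omega=\mathbb R^d$ and the other unbounded boxes uniformly, whereas the paper handles $\mathbb R^d$ separately via Theorem~\ref{counterexample}, and that you spell out the non-invertibility and point-value estimates that the paper merely sketches.
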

\begin{proof}
Since the proof of (\ref{invchary})$\Rightarrow$(\ref{invcharx}) is
provided by Lemma \ref{lemmabounded}, we only need to show the
converse direction. If $\Omega=\mathbb R^d$, then this is a
consequence of Theorem \ref{counterexample}. Assume therefore
$\Omega\neq \mathbb R^d$. By a permutation of variables and a
translation $x_1\mapsto x_1+t$ for some $t$ or a possible reflexion
$x_1\mapsto-x_1$, we may assume without loss of generality that
$\Omega=(0,\infty)\times \Omega'$ with $\Omega\subseteq\mathbb
R^{d-1}$. Let $1-\sigma\in\mathcal D(\mathbb R^1)$ such that
$\sigma=0$ on $ x\geq 1$ and $\sigma=1$  for $x\leq 1/2$. Define
similarly to Theorem \ref{counterexample}, for $x>0$,
$u^{(1)}_\varepsilon(x)=(1-\sigma(x))\varepsilon^{(\log_\varepsilon(x))^2}+\sigma(x)$
and let $u_\varepsilon(x_1,\dots,x_n):=u_\varepsilon^{(1)}(x_1)$.
Hence, when $\vert x\vert<\varepsilon^{-j}$, then evidently $
x_1<\varepsilon^{-j}$, hence by Theorem \ref{counterexample},
$u_\varepsilon(x)>\varepsilon^{j^2}$. Hence, for all $\widetilde
x\in\widetilde{\Omega}$, $u(\widetilde x)$ is strictly positive,
hence invertible. One can further show that $u$ cannot be invertible
by following the lines of the proof of Theorem \ref{counterexample}.
\end{proof}
\section{Point-values in $\mathcal G_\tau(\Omega)$}
Aim of this section is to answer the following question raised by
Stevan Pilipovi{\'c}: "Can
elements of $\mathcal G_\tau(\Omega)$ uniquely be determined merely
by evaluation at generalized points with strict positive distance to
the boundary $\partial \Omega$?".

This question only makes sense in algebras which admit a
point-value characterization; this for instance is the case for open
boxes in $\mathbb R^d$ (cf.\ Theorem \ref{charpointvalues}). Hence,
we shall discuss this case here.

First we need some technical lemma in order to allow for a well
defined notion of "distance of a generalized point $\widetilde x$ to
the boundary $\partial \Omega$".
\begin{lemma}
Let $a_1,\dots,a_m\in\widetilde{\mathbb R}$ be given ($m\geq 1$).
Then
\[
\inf
(a_1,\dots,a_m):=(\min(a_1^\varepsilon,\dots,a_m^\varepsilon))_\varepsilon+\mathcal
N,
\]
where $(a_i^\varepsilon)_\varepsilon$ ($i=1,\dots,m$) are
representatives of $a_i$ ($i=1,\dots,m$), yields a well-defined
element of $\widetilde{\mathbb R}$.
\end{lemma}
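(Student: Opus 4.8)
The plan is to verify the two conditions that make the right-hand side a legitimate element of $\widetilde{\mathbb R}$: first, that the net $(\min(a_1^\varepsilon,\dots,a_m^\varepsilon))_\varepsilon$ is moderate; second, that its class modulo $\mathcal N$ is independent of the chosen representatives $(a_i^\varepsilon)_\varepsilon$ of the $a_i$. The point that keeps everything elementary is that the index set $\{1,\dots,m\}$ is finite and does not vary with $\varepsilon$.

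For moderateness I would use the trivial bound $|\min(t_1,\dots,t_m)|\leq\max_{1\leq i\leq m}|t_i|$ for reals. Since each $a_i\in\widetilde{\mathbb R}$ is moderate there is $N_i$ with $|a_i^\varepsilon|=O(\varepsilon^{-N_i})$, and with $N:=\max_i N_i$ this gives $|\min(a_1^\varepsilon,\dots,a_m^\varepsilon)|\leq\max_i|a_i^\varepsilon|=O(\varepsilon^{-N})$, so the net of minima is moderate.

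The substance is the Lipschitz-type estimate
\[
\bigl|\min(s_1,\dots,s_m)-\min(t_1,\dots,t_m)\bigr|\leq\max_{1\leq i\leq m}|s_i-t_i|
\]
for arbitrary reals $s_i,t_i$. One proves it by a one-line case distinction: assuming without loss of generality $\min(s_1,\dots,s_m)\geq\min(t_1,\dots,t_m)$ and choosing $k$ with $t_k=\min(t_1,\dots,t_m)$, one has $0\leq\min(s_1,\dots,s_m)-t_k\leq s_k-t_k\leq\max_i|s_i-t_i|$. Granting this, let $(b_i^\varepsilon)_\varepsilon$ be a second family of representatives, so that $(a_i^\varepsilon-b_i^\varepsilon)_\varepsilon\in\mathcal N$, i.e.\ $|a_i^\varepsilon-b_i^\varepsilon|=O(\varepsilon^p)$ for every $p\geq 0$ and every $i$. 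The estimate yields
\[
\bigl|\min(a_1^\varepsilon,\dots,a_m^\varepsilon)-\min(b_1^\varepsilon,\dots,b_m^\varepsilon)\bigr|\leq\max_{1\leq i\leq m}|a_i^\varepsilon-b_i^\varepsilon|,
\]
and the right-hand side is $O(\varepsilon^p)$ for all $p$ --- for a fixed $p$ take the worst of the $m$ implied constants and thresholds --- so the difference of the two nets of minima belongs to $\mathcal N$. Hence both choices of representatives give the same class, and $\inf(a_1,\dots,a_m)$ is well defined.

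No genuine obstacle is expected: the proof reduces to the elementary inequality above together with the observation that $\mathcal N$ is closed under finite maxima, the latter being immediate precisely because the number of indices is fixed and does not depend on $\varepsilon$.
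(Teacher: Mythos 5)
Your proof is correct, but it takes a genuinely different route from the paper's. The paper argues by contradiction: it supposes the difference of the two nets of minima exceeds $\varepsilon_k^m$ along a zero sequence, uses the pigeonhole principle to extract an infinite subsequence along which the minimum of the first family is always attained at a fixed index $i$ and that of the perturbed family at a fixed index $j$, and then derives a contradiction separately in the cases $i=j$ and $i\neq j$ (the latter by playing the two minimality relations against the negligibility of $n_i^\varepsilon-n_j^\varepsilon$). You instead give a direct proof resting on the elementary $1$-Lipschitz estimate
\[
\bigl|\min(s_1,\dots,s_m)-\min(t_1,\dots,t_m)\bigr|\;\leq\;\max_{1\leq i\leq m}\vert s_i-t_i\vert ,
\]
which immediately shows that perturbing each entry by a negligible net perturbs the minimum by a negligible net. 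Your case distinction proving this inequality is sound, and the reduction of ``$\mathcal N$ is closed under finite maxima'' to the finiteness of the index set is exactly the right observation. Your route is shorter, avoids the subsequence extraction entirely, and in addition records the moderateness of the net of minima (via $\vert\min_i t_i\vert\leq\max_i\vert t_i\vert$), a point the paper's proof leaves implicit even though it is part of what ``well-defined element of $\widetilde{\mathbb R}$'' requires. What the paper's argument buys in exchange is essentially nothing here --- it is the same finite-index phenomenon dressed up as a pigeonhole argument --- so your version can be regarded as a streamlining; it also sidesteps the paper's unfortunate double use of the letter $m$ for both the number of entries and the negligibility exponent.
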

\begin{proof}
Assume this is not the case, that is, there exist representatives
$(a_1^\varepsilon)_\varepsilon,\,\dots,(a_m^\varepsilon)_\varepsilon$
of $a_1,\dots,a_m$ and negligible nets of numbers
$(n_1^\varepsilon)_\varepsilon,\,\dots,(n_m^\varepsilon)_\varepsilon$,
a zero sequence $\varepsilon_k\rightarrow 0$ and an $m\geq 0$ such
that
\begin{equation}\label{eqpig}
(\forall k) (\vert \min (a_i^{\varepsilon_k};i=1,\dots,m)-\min
(a_i^{\varepsilon_k}+n_i^{\varepsilon_k};i=1,\dots,m)\vert\geq\varepsilon_k^m
.
\end{equation}
Hence, by the pigeon hole principle, there exists an infinite
subsequence $k_l$, $l=1,2,\dots,$ as well as
$(i,j)\in\{1,\dots,m\}^2$ such that
\begin{equation}\label{piggie}
a_i^{\varepsilon_{k_l}}=\min
(a_i^{\varepsilon_{k_l}};i=1,\dots,m)
\end{equation}
as well as
\begin{equation}\label{piggy}
a_j^{\varepsilon_{k_l}}+n_j^{\varepsilon_{k_l}}=\min
(a_i^{\varepsilon_{k_l}}+n_i^{\varepsilon_{k_l}};i=1,\dots,m).
\end{equation}
We distinguish the following two cases.
\begin{itemize}
\item If $i=j$, then by (\ref{eqpig}), $\vert a_i^{\varepsilon_{k_l}}-(a_i^{\varepsilon_{k_l}}+n_i^{\varepsilon_{k_l}}) \vert=\vert n_i^{\varepsilon_{k_l}}\vert=O(\varepsilon_{k_l}^p)$ ($l\rightarrow\infty$) for each $p$, hence this contradicts (\ref{eqpig}). Hence, the
sequence $k_l$ cannot be infinite, contradiction.
\item If $i\neq j$, then by (\ref{eqpig}) for large $l$ we even
have, by the negligibility of $n_i^\varepsilon$,
\begin{equation}\label{pigex}
\vert a_i^{\varepsilon_{k_l}}-a_j^{\varepsilon_{k_l}}
\vert>\varepsilon_{k_l}^m/2.
\end{equation}
Because of (\ref{piggie}) we have
\[
a_i^{\varepsilon_{k_l}}\leq a_j^{\varepsilon_{k_l}}.
\]
Using further (\ref{pigex}), the latter inequality yields
\begin{equation}\label{salat1}
a_i^{\varepsilon_{k_l}}<
a_j^{\varepsilon_{k_l}}-\varepsilon_{k_l}^m/2.
\end{equation}
Furthermore, by (\ref{piggy}) we got
\begin{equation}\label{salat2}
a_j^{\varepsilon_{k_l}}\leq
a_i^{\varepsilon_{k_l}}+n_i^{\varepsilon_{k_l}}-n_j^{\varepsilon_{k_l}}.
\end{equation}
Hence by using (\ref{salat1}) and (\ref{salat2}) and the
negligibility of $n_i^{\varepsilon_{k_l}}-n_j^{\varepsilon_{k_l}}$,
we receive for arbitrary $p\geq 0$ for sufficiently large $l$
\[
a_i^{\varepsilon_{k_l}}+\varepsilon_{k_l}^m/2<a_j^{\varepsilon_{k_l}}\leq
a_i^{\varepsilon_{k_l}}+n_i^{\varepsilon_{k_l}}-n_j^{\varepsilon_{k_l}}<\varepsilon_{k_l}^p.
\]
This is a contradiction for $p>m$. Hence $k_l$ cannot be an infinite
sequence. This contradiction proves the claim.

\end{itemize}

\end{proof}
\begin{definition}\label{defdist}
Let $\Omega$ be a box in $\mathbb R^d$, $\Omega\neq\mathbb R^d$. For
$\widetilde x\in\widetilde{\Omega}$ we define the distance of
$\widetilde x$ to the boundary by
\[
d(\widetilde
x,\partial\Omega):=(d(x_\varepsilon,\partial\Omega))_\varepsilon+\mathcal
N,
\]
where $(x_\varepsilon)_\varepsilon$ is an arbitrary representative
of $x$. \end{definition}
\begin{lemma}
The distance function as given in Definition \ref{defdist} is
well-defined.
\end{lemma}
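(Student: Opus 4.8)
The plan is to express the distance to the boundary of a box as a minimum of finitely many affine functions of the coordinates and then invoke the preceding lemma. Write $\Omega=\prod_{i=1}^{d}(a_i,b_i)$ with $a_i\in[-\infty,\infty)$ and $b_i\in(-\infty,\infty]$; since $\Omega\neq\mathbb R^d$, the set $J$ of indices carrying a finite left or right endpoint is non-empty and finite. For any interior point $x=(x^1,\dots,x^d)\in\Omega$ one has $d(x,\partial\Omega)=d(x,\mathbb R^d\setminus\Omega)$, and by the product structure of the box this equals the minimum, taken over the finitely many finite faces, of the quantities $x^i-a_i$ (for those $i$ with $a_i$ finite) and $b_i-x^i$ (for those $i$ with $b_i$ finite): a point outside $\Omega$ must violate one coordinate constraint, and the closest such point is obtained by sliding $x$ along that single axis onto the offending face.

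Granting this elementary identity, I would first check moderateness. If $(x_\varepsilon)_\varepsilon\in\Omega_M$ represents $\widetilde x$, then each coordinate net $(x^i_\varepsilon)_\varepsilon$ is moderate, hence so are the finitely many nets $(x^i_\varepsilon-a_i)_\varepsilon$ and $(b_i-x^i_\varepsilon)_\varepsilon$ occurring for $i\in J$, and therefore their pointwise minimum $(d(x_\varepsilon,\partial\Omega))_\varepsilon$ is a moderate net of positive reals and does define an element of $\widetilde{\mathbb R}$. For independence of the chosen representative, suppose $(x_\varepsilon)_\varepsilon\sim(y_\varepsilon)_\varepsilon$ in $\Omega_M$; then $|x^i_\varepsilon-y^i_\varepsilon|=O(\varepsilon^p)$ for every $p$ and every $i$, so for each $i\in J$ the nets $(x^i_\varepsilon-a_i)_\varepsilon$ and $(y^i_\varepsilon-a_i)_\varepsilon$ (and likewise the $b_i$-versions) are representatives of one and the same generalized number. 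Applying the previous lemma to this finite family of generalized numbers shows that the net of minima built from $(x_\varepsilon)_\varepsilon$ and the one built from $(y_\varepsilon)_\varepsilon$ differ by a net in $\mathcal N$, which by the identity above is precisely $(d(x_\varepsilon,\partial\Omega)-d(y_\varepsilon,\partial\Omega))_\varepsilon$. Hence $d(\widetilde x,\partial\Omega)$ is well-defined.

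I do not anticipate a real obstacle; the only thing requiring a little care is the bookkeeping around infinite endpoints, which is handled by simply discarding those faces — legitimate because $J\neq\emptyset$ keeps the minimum non-empty and, for an axis-parallel box, the remaining finite faces are the only candidates for the nearest boundary point. I note in passing that negligibility can alternatively be obtained directly from the classical fact that $d(\cdot,\partial\Omega)$ is $1$-Lipschitz, via $|d(x_\varepsilon,\partial\Omega)-d(y_\varepsilon,\partial\Omega)|\le|x_\varepsilon-y_\varepsilon|=O(\varepsilon^p)$; in that route the preceding lemma is needed only to make the minimum-of-representatives argument transparent, while moderateness still follows from $d(x_\varepsilon,\partial\Omega)\le|x_\varepsilon-a_i|$ for a fixed finite endpoint $a_i$.
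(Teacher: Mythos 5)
Your proposal is correct and follows essentially the same route as the paper: both express $d(x_\varepsilon,\partial\Omega)$ for an interior point of the box as the minimum, over the finitely many finite endpoints, of the coordinate distances $|x^{(i)}_\varepsilon-a_i|$, $|x^{(i)}_\varepsilon-b_i|$, and then invoke the preceding lemma on the well-definedness of the minimum of finitely many generalized numbers. Your additional remarks on moderateness and the $1$-Lipschitz shortcut are sound but not needed beyond what the paper records.
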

\begin{proof}
Let $\widetilde x\in\widetilde{\Omega}$ with representative $(x_\varepsilon)_\varepsilon$
be given. We can write $\Omega=\prod_{j=1}^d I_j$ with
$I_j:=(a_j,b_j)$, $a_j,b_j\in\mathbb R\cup\{-\infty,\infty\}$.
Clearly for each $\varepsilon$ we have
\[
d(x_\varepsilon,\partial\Omega)=\min\{\vert
x_\varepsilon^{(j)}-a_j\vert,\vert
x_\varepsilon^{(j)}-b_j\vert,\;a_j,\;b_j\neq \pm\infty\}.
\]
Hence, applying the preceding lemma to the the generalized numbers
$[(\vert x^{(i)}_\varepsilon-a_i\vert)_\varepsilon]$, $[(\vert
x^{(i)}_\varepsilon-b_i\vert)_\varepsilon]$ for
$a_i,b_i\neq\pm\infty$, we have the assertion.
\end{proof}
\begin{definition}
Let $\widetilde x,\;\widetilde y\in\widetilde{\mathbb R}$. We say
$\widetilde x$ is strictly smaller then $\widetilde y$, if for all
representatives
$(x_\varepsilon)_\varepsilon,\;(y_\varepsilon)_\varepsilon$ there
exists $m$ such that $y_\varepsilon-x_\varepsilon\geq \varepsilon^m$
for sufficiently small $\varepsilon$.
\end{definition}
Now we come to our main statement which answers S. Pilipovic's
question for boxes:
\begin{theorem}
Let $\emptyset\neq\Omega\subsetneq\mathbb R^d$ be a box,
$u\in\mathcal G_\tau(\Omega)$. The following are equivalent:
\begin{enumerate}
\item  \label{lab1} $u=0$ in $\mathcal G_\tau(\Omega)$,
\item \label{lab2} for all $\widetilde x\in\widetilde{\Omega}$ with $d(\widetilde
x,\partial\Omega)>0$ we have $u(\widetilde x)=0$ in
$\widetilde{\mathbb R}$.
\end{enumerate}
\end{theorem}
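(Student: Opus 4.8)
The plan is to deduce the claim from the point value characterization in Theorem~\ref{charpointvalues}. The implication (\ref{lab1})$\Rightarrow$(\ref{lab2}) is immediate, since every $\widetilde x\in\widetilde\Omega$ with $d(\widetilde x,\partial\Omega)>0$ is in particular an element of $\widetilde\Omega$ and $u(\widetilde x)$ is obtained by evaluating a representative. For the converse it suffices to show that (\ref{lab2}) forces $u(\widetilde x)=0$ in $\widetilde{\mathbb R}$ for \emph{every} $\widetilde x\in\widetilde\Omega$; then Theorem~\ref{charpointvalues} yields $u=0$. The idea is to approximate an arbitrary generalized point by genuinely interior ones --- points whose distance to $\partial\Omega$ is bounded below by a power of $\varepsilon$ --- and to transport the vanishing of $u$ along such an approximation by a quantitative mean value estimate supplied by moderateness.

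So let $\widetilde x\in\widetilde\Omega$ with a fixed representative $(x_\varepsilon)_\varepsilon\in\Omega_M$, write $\Omega=\prod_{j=1}^d(a_j,b_j)$ with $a_j,b_j\in\mathbb R\cup\{\pm\infty\}$ (at least one of them finite, since $\Omega\neq\mathbb R^d$), and fix $L$ with $|x_\varepsilon|=O(\varepsilon^{-L})$, together with a representative $(u_\varepsilon)_\varepsilon$ of $u$ and an $M$ such that $|u_\varepsilon(z)|+|\nabla u_\varepsilon(z)|\le\varepsilon^{-M}(1+|z|)^M$ for all $z\in\Omega$ and small $\varepsilon$ (moderateness of $u$, applied also to its first order partials, which again belong to $\mathcal G_\tau(\Omega)$). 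For each $k\in\mathbb N$ let $y^{(k)}_\varepsilon$ be obtained from $x_\varepsilon$ by replacing its $j$-th coordinate with the nearest point of $[a_j+\varepsilon^k,\,b_j-\varepsilon^k]$, using the obvious one-sided convention whenever $a_j$ or $b_j$ is infinite. For $\varepsilon$ small enough these intervals are non-empty, $y^{(k)}_\varepsilon\in\Omega$, $|x_\varepsilon-y^{(k)}_\varepsilon|\le\sqrt d\,\varepsilon^k$, and $d(y^{(k)}_\varepsilon,\partial\Omega)\ge\varepsilon^k$. Hence $(y^{(k)}_\varepsilon)_\varepsilon$ represents a point $\widetilde y^{(k)}\in\widetilde\Omega$ whose distance to $\partial\Omega$ is bounded below by $\varepsilon^k$, hence strictly positive, so by (\ref{lab2}) we get $u(\widetilde y^{(k)})=0$, i.e.\ $|u_\varepsilon(y^{(k)}_\varepsilon)|=O(\varepsilon^q)$ for every $q\ge0$.

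Since boxes are convex, the segment $[x_\varepsilon,y^{(k)}_\varepsilon]$ lies in $\Omega$ and every $z$ on it satisfies $|z|\le|x_\varepsilon|+\sqrt d$; by the fundamental theorem of calculus and the gradient bound above,
\[
|u_\varepsilon(x_\varepsilon)-u_\varepsilon(y^{(k)}_\varepsilon)|\le\sqrt d\,\varepsilon^k\cdot\varepsilon^{-M}(1+|x_\varepsilon|+\sqrt d)^M=O(\varepsilon^{\,k-M-LM}).
\]
Therefore $|u_\varepsilon(x_\varepsilon)|\le|u_\varepsilon(y^{(k)}_\varepsilon)|+O(\varepsilon^{\,k-M-LM})$. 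Given any $p\ge0$, taking $k:=p+M+LM$ gives $|u_\varepsilon(x_\varepsilon)|=O(\varepsilon^p)$; as $p$ is arbitrary, $(u_\varepsilon(x_\varepsilon))_\varepsilon$ is negligible, that is $u(\widetilde x)=0$ in $\widetilde{\mathbb R}$. Since $\widetilde x\in\widetilde\Omega$ was arbitrary, Theorem~\ref{charpointvalues} gives $u=0$ in $\mathcal G_\tau(\Omega)$, which is (\ref{lab1}).

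The one point that requires care is the construction of the perturbed points: the displacement $|x_\varepsilon-y^{(k)}_\varepsilon|$ must be made smaller than \emph{every} power of $\varepsilon$ (by letting $k\to\infty$) while $\widetilde y^{(k)}$ stays moderate and keeps a distance to $\partial\Omega$ that is still strictly positive as a generalized number. This is precisely what the product structure of a box makes possible, since pushing the $j$-th coordinate to distance $\varepsilon^k$ from $\{a_j,b_j\}$ costs a displacement of at most $\varepsilon^k$; the polynomial weights $(1+|x_\varepsilon|)^M$ in the moderateness bounds are harmless because $x_\varepsilon$ is moderate. Everything else is routine.
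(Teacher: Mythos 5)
Your proposal is correct and follows essentially the same route as the paper: perturb an arbitrary moderate point $\widetilde x$ by $O(\varepsilon^k)$ into a point whose distance to $\partial\Omega$ is at least $\varepsilon^k$ (hence strictly positive), use the hypothesis there, transfer the estimate back via the mean value theorem together with moderateness of the first derivatives, let $k\to\infty$, and invoke Theorem~\ref{charpointvalues}. The only cosmetic difference is that you realize the perturbation by projecting each coordinate onto $[a_j+\varepsilon^k,\,b_j-\varepsilon^k]$ rather than shifting by $\pm\varepsilon^k e_i$ as the paper does, which changes nothing essential.
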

\begin{proof}
Since (\ref{lab1})$\Rightarrow$(\ref{lab2}) is clear, we only need
to show the converse direction. Let
$(u_\varepsilon)_\varepsilon,\;(x_\varepsilon)_\varepsilon$ be
representatives of $u$ and $\widetilde x$. Let $m>0$. Then we have
\begin{eqnarray}\label{eq1x}
&&(\exists \varepsilon_0)(\forall\varepsilon<\varepsilon)(\forall
i=1,\dots,d)(\exists\delta_\varepsilon^{(i)}\in
\{-1,1\})\\\nonumber&&(\textit{for}\;\;y_\varepsilon^{(i)}:=x_\varepsilon^{(i)}+\delta_\varepsilon^{(i)}\varepsilon^me_i\;\;\textit{we
have}\;\;\\\nonumber&&y_\varepsilon\in \Omega\;\;\\\nonumber
&&\textit{and}\\\nonumber&&
d(y_\varepsilon,\partial\Omega)>\varepsilon^m).
\end{eqnarray}
 In
terms of representatives, condition (\ref{lab2}) means
\begin{equation}
(\forall p\geq 0)(\vert
u_\varepsilon(y_\varepsilon)\vert=O(\varepsilon^p),\;
\varepsilon\rightarrow 0).
\end{equation}
Furthermore, by the mean value theorem,
\begin{equation}
(\forall\varepsilon<\varepsilon_0)(\exists
\Theta_\varepsilon\in(0,1))
(u_\varepsilon(y_\varepsilon)-u_\varepsilon(x_\varepsilon)=\sum_{i=^1}^d\{\partial_i
u_\varepsilon(x_\varepsilon+\Theta_\varepsilon(y_\varepsilon-x_\varepsilon))\}\delta_\varepsilon^{(i)}\varepsilon^m)
.
\end{equation}
Clearly, $(z_\varepsilon)_\varepsilon$ defined by
$z_\varepsilon:=x_\varepsilon+\Theta_\varepsilon(y_\varepsilon-x_\varepsilon)$
has moderate growth, say $\vert z_\varepsilon\vert\leq
\varepsilon^{-N'}$ for some $N'$ and sufficiently small
$\varepsilon$. Furthermore, since $u_\varepsilon$ is moderate, for
some $N$ and sufficiently small $\varepsilon$ we further have
\begin{equation}\label{eq4x}
\vert u_\varepsilon(z_\varepsilon)\vert\leq\varepsilon^{-N}(1+\vert
z_\varepsilon\vert)^N\leq \varepsilon^{-N(N'+1)}.
\end{equation}
Putting eq.\ (\ref{eq1x})--(\ref{eq4x}) together, we obtain for
arbitrary $p,\,m$ and sufficiently small $\varepsilon$,
\begin{eqnarray}
&&\vert \vert
u_\varepsilon(x_\varepsilon)\vert-\varepsilon^p\vert\leq\vert
u_\varepsilon(y_\varepsilon)-u_\varepsilon(x_\varepsilon)\vert\\\nonumber
&&\leq \varepsilon^{m-N(2+N')}.
\end{eqnarray}
Hence $\forall\, l\geq 0,\,\vert
u_\varepsilon(x_\varepsilon)\vert=O(\varepsilon^l)$,
($\varepsilon\rightarrow 0$). Hence we have proven that for all
$\widetilde x\in\widetilde{\Omega}$ we have $u(\widetilde x)=0$.
According to Theorem \ref{charpointvalues} (\ref{charpoints}), $u=0$
in $\mathcal G_\tau(\Omega)$ and we are done.
\end{proof}
\section*{Background Story and Outlook}
This manuscript solves elementary questions raised by Michael
Kunzinger (in the context of generalized Variational Calculus) and
Stevan Pilipovic. It was written in winter 2006 and presented in
Innsbruck some weeks later; since then, pointwise characterizations
in generalized function algebras have been advanced, e.g., by
Vernaeve \cite{Vernaeve} (cf.\ the proof of Proposition 3.4). The
author has also contributed to some further work by Pilipovic et al
\cite{scarpi}, which uses similar conditions as Proposition 3.5 (3)
in \cite{Vernaeve}. For related work by the author himself, cf.
\cite{algfound, egorov, spherical}.

\end{document}